\DeclarePairedDelimiterX\set[1]{\lbrace}{\rbrace}{#1}
\DeclarePairedDelimiterX\abs[1]{\lvert}{\rvert}{#1}
\newcommand{\QQ}{\mathbb{Q}} % Rationals
\newcommand{\ZZ}{\mathbb{Z}} % Integers
\newtheorem{theorem}{Theorem}[section]
\newtheorem{lemma}[theorem]{Lemma}
\newtheorem{corollary}[theorem]{Corollary}
\theoremstyle{definition}
\newtheorem{example}[theorem]{Example}
\theoremstyle{remark}
\newtheorem{remark}[theorem]{Remark}
\begin{document}

\title{On the Number of 2-Hooks and 3-Hooks of Integer Partitions}

%\thanks{Last updated: \today}

\author[Mcspirit]{Eleanor Mcspirit}
\address{Department of Mathematics, University of Virginia, Charlottesville, VA 22904}
\email{egm3zq@virginia.edu}
%\urladdr{\url{}}

\author[Scheckelhoff]{Kristen Scheckelhoff}
\address{Department of Mathematics, University of Virginia, Charlottesville, VA 22904}
\email{qpz4ex@virginia.edu}
%\urladdr{\url{}}

%\keywords{}

\thanks{E.M. acknowledges the support of a UVA Dean's Doctoral Fellowship.}

\maketitle

\begin{abstract}
  Let $p_t(a,b;n)$ denote the number of partitions of $n$ such that the number of $t$ hooks is congruent to $a \bmod{b}$. For  $t\in \{2, 3\}$, arithmetic progressions $r_1 \bmod{m_1}$ and $r_2 \bmod{m_2}$ on which $p_t(r_1,m_1; m_2 n + r_2)$ vanishes were established in recent work by Bringmann, Craig, Males, and Ono using the theory of modular forms. Here we offer a direct combinatorial proof of this result using abaci and the theory of $t$-cores and $t$-quotients. 
\end{abstract}

%\linenumbers

\section{Introduction and Statement of Results}

A partition $\lambda$ of an integer $n$, which is a non-increasing sequence of positive integers summing to $n$, can be represented visually by a collection of boxes arranged in left-justified rows.  The row lengths are arranged in non-increasing order and correspond to the size of each part of $\lambda$. Such a presentation is called the \emph{Ferrers-Young diagram} for $\lambda$. In such a diagram, we may refer to the cells by their place in this array; we will denote the cell in row $i$ and column $j$ by $(i,j)$. 
    
We define the \emph{hook} of $(i,j)$ to be the collection of cells $(a,b)$ such that $a=i$ and $b \geq j$ or $a \geq i$ and $b=j$. The \emph{length} of such a hook is the cardinality of this set, which we will denote $h(i,j)$. We will call a hook a \emph{$t$-hook} if its length is divisible by $t$. For example, the hook-lengths $h(i,j)$ for the partition $3+2+1$ are labeled in their respective cells below: 
$$\begin{ytableau}
  5 & 3 & 1\cr
  3 & 1\cr
  1 \cr
\end{ytableau}$$

A study of hook numbers enriches many areas where partitions naturally arise. For instance, recall that the partitions of $n$ parameterize the irreducible complex representations of the symmetric group on $n$ letters. Moreover, the degree of such a representation $p_\lambda\colon S_n \to \text{GL}(V_\lambda)$ is given by the Frame-Thrall-Robinson hook-length formula: if $\lambda$ is a partition of $n$ and $p_\lambda$ is an irreducible complex representation of $S_n$ as above, then 
        \[
        \dim(V_\lambda)= \frac{n!}{\prod_{h(i,j) \in \mathcal{H}(\lambda)}h(i,j)},
        \]
        where $\mathcal{H}(\lambda)$ is the multiset of hook-lengths of cells in the Ferrers-Young diagram for $\lambda$ \cite{FrameRobinsonThrall}.
    
    Further,
    recall the famous $q$-series identities of Euler and Jacobi:
\begin{align*}
    q \prod_{n=1}^\infty (1-q^{24n}) &= q-q^{25}-q^{49}+q^{121}+q^{169}-\dots \\
    q \prod_{n=1}^\infty (1-q^{8n})^3 &= q-3q^9+5q^{25}-7q^{49}+11q^{121}-\dots
\end{align*}

These identities appear as specializations of the Nekrasov-Okounkov hook-length formula, which is derived by taking a $z$-deformation of the generating function for the partition function. In particular, 
for any complex number $z$, we have 
\[\prod_{n=1}^\infty(1-q^n)^{z-1} = \sum_{\lambda} q^{|\lambda|} \prod_{h(i,j) \in \mathcal{H}(\lambda)} \left(1-\frac{z}{h(i,j)^2}\right).\] This result, in which hook-lengths prominently appear, constitutes a significant generalization of many new and classical number-theoretic results on $q$-series. 

We are motivated to explore the the number of $t$-hooks across all partitions of size $n$ in an attempt to obtain an analogue of Dirichlet's theorem on primes in arithmetic progressions. There has been recent work on the distribution of partitions where the number of $t$-hooks lie in a fixed progression, and it turns out that -- unlike in Dirichlet's theorem -- equidistribution does not in general hold. Going beyond unequal distribution, there are situations where such counts are identically zero. For example, consider the following results of \cite{BCMO} for congruence classes of the number of 2-hooks mod 3 for selected integers $n$:

% copy-paste a table here from Ken's presentation or Will/Anna's paper as an example

\quad

\begin{center}
\begin{tabular}{|c|c|c|c|}
    \hline
    $n$ & $p_2(0,3;n)$ & $p_2(1,3;n)$ & $p_2(2,3;n)$  \\
    \hline
    300 & $\approx 0.7347$ & $\approx 0.2653$ & 0 \\
    \vdots & \vdots & \vdots & \vdots \\
    600 & $\approx 0.6977$ & $\approx 0.3022$ & 0 \\
    900 & $\approx 0.6837$ & $\approx 0.3163$ & 0 \\
    \vdots & \vdots & \vdots & \vdots \\
    4500 & $\approx 0.6669$ & $\approx 0.3330$ & 0 \\
    4800 & $\approx 0.6669$ & $\approx 0.3330$ & 0 \\
    5100 & $\approx 0.6668$ & $\approx 0.3331$ & 0 \\
    \hline
\end{tabular}
\end{center}

\quad

This example highlights one of the more striking examples of the unequal distribution of $t$-hooks that has been observed in recent work: see \cite{CraigPun} and \cite{BCMO}. In particular, these results construct arithmetic progressions where these counts are identically zero. The existing proofs make use of the theory of theta functions to find exact formulas for the asymptotic behavior of the number of $t$-hooks in a particular arithmetic progression. In this note, however, we argue the following result directly by appealing to the combinatorial study of hook-lengths through abaci: 

    \begin{theorem}{\text {\rm (\cite{BCMO} Theorem 1.3)}} \label{thm:main}
    Let $\left(\frac{\cdot}{\ell}\right)$ denote the Legendre symbol and $\mathrm{ord}_\ell(n)$ the $\ell$-adic valuation of $n$.
Then the following are true.
    \begin{enumerate}
        \item If $\ell$ is an odd prime and $a_1,a_2$ are  integers for which $\left(\frac{-16a_1+8a_2+1}{\ell}\right)=-1$, then we have
        \[
        p_2\left(a_1,\ell;\ell k+a_2\right)=0.
        \]
        \item If $\ell \equiv 2 \bmod{3}$ is prime and $a_1,a_2$ are  integers for which $\mathrm{ord}_{\ell}(-9a_1 + 3a_2+1)=1$, then we have 
        \[
        p_3\left(a_1,\ell^2;\ell^2 k+a_2\right)=0.
        \]
    \end{enumerate}
\end{theorem}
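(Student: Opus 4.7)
I will invoke the $t$-runner abacus bijection, which decomposes each partition $\lambda$ of $n$ uniquely as $\lambda \leftrightarrow (c_t(\lambda), Q_t(\lambda))$, where $c_t(\lambda)$ is the $t$-core and $Q_t(\lambda) = (\lambda^{(0)}, \ldots, \lambda^{(t-1)})$ is the $t$-quotient. Under this correspondence the number of $t$-hooks of $\lambda$ equals the total size of the quotient, so that
\[
|\lambda| = |c_t(\lambda)| + t \cdot h_t(\lambda).
\]
In particular, a partition can contribute to $p_t(a_1, M; Mk + a_2)$ only if its $t$-core has size congruent to $a_2 - t a_1$ modulo $M$. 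In each part, I will show that the stated arithmetic condition rules out the existence of a $t$-core of any such size, forcing the count to vanish.

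\textbf{Part (1).} The 2-cores are exactly the staircase partitions $(j, j-1, \ldots, 1)$, whose sizes are the triangular numbers $T_j = j(j+1)/2$. Existence of a 2-core with size $\equiv a_2 - 2 a_1 \pmod \ell$ would require some $j \geq 0$ satisfying $j(j+1)/2 \equiv a_2 - 2 a_1 \pmod \ell$. Multiplying by $8$ (a unit mod the odd prime $\ell$) and completing the square converts this into
\[
(2j+1)^2 \equiv -16 a_1 + 8 a_2 + 1 \pmod \ell.
\]
The Legendre-symbol hypothesis forces the right-hand side to be a nonzero quadratic non-residue mod $\ell$, so no such $j$ exists and part (1) follows.

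\textbf{Part (2).} The three-runner abacus parameterizes 3-cores by lattice points $(m_1, m_2) \in \ZZ^2$ after normalizing out the shift $(n_0, n_1, n_2) \sim (n_0+1, n_1+1, n_2+1)$; a direct beta-number sum then yields the size formula
\[
3 \cdot |c_3(\lambda)| + 1 = (3 m_1)^2 + (3 m_1)(3 m_2 + 1) + (3 m_2 + 1)^2.
\]
Consequently, a 3-core of size $c$ exists only if $3c + 1$ is represented by the Loeschian form $x^2 + xy + y^2$, and representability is controlled by the classical fact that an integer is a value of this form if and only if every rational prime $\equiv 2 \pmod 3$ divides it to an even power. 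Under our hypothesis $3c + 1 \equiv -9 a_1 + 3 a_2 + 1 \pmod{\ell^2}$ and $\mathrm{ord}_\ell(-9 a_1 + 3 a_2 + 1) = 1$, we conclude $\mathrm{ord}_\ell(3c + 1) = 1$. Since $\ell \equiv 2 \pmod 3$, this violates the even-exponent criterion, so no 3-core of the required size exists.

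\textbf{Main obstacle.} The substantive step is extracting the quadratic-form description $3c + 1 = x^2 + xy + y^2$ for 3-core sizes out of the three-runner abacus, which requires an honest beta-number computation and a choice of transversal for the $(1,1,1)$-shift. Part (1) needs only the staircase description of 2-cores and a completion of the square, and both parts then conclude via elementary number theory (quadratic reciprocity for part (1); inertness of primes $\ell \equiv 2 \pmod 3$ in $\ZZ[\omega]$ for part (2)).
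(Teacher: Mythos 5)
Your proposal is correct and follows essentially the same route as the paper: the core--quotient identity $|\lambda| = |\tilde{\lambda}| + t\cdot h_t(\lambda)$, the classification of 2-cores by triangular numbers (equivalently $8n+1$ an odd square) and of 3-cores by representability of $3n+1$ by the discriminant $-3$ form together with the even-exponent criterion at primes $\equiv 2 \bmod 3$, and then the same congruence computations modulo $\ell$ and $\ell^2$. The only cosmetic differences are your use of $x^2+xy+y^2$ in place of the paper's equivalent $x^2-xy+y^2$ and the passing mention of quadratic reciprocity, which is not actually needed for part (1).
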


\begin{example}
    Let $\ell = 5$, $a_1=1$ and $a_2=1$. Observe that $\left( \frac{-16a_1+8a_2+1}{5}\right) = -1$ and
    $\mathrm{ord}_{5}(-9a_1+3a_2+1)=1$. Thus, if $n \equiv 1 \bmod{5}$, we have $p_2(1,5,n)=0$; i.e., there does not exist a partition $\lambda$ of $n$ where the number of $2-$hooks of $\lambda$ is equivalent to 1 mod 5. Likewise, if $n \equiv 1 \bmod{25},$ then $p_3(1,25;n)=0$. 
\end{example}

In the following sections we make use of a well-known bijection between integer partitions and their decompositions into $t$-cores and $t$-quotients.  We are then able to approach the theorem directly using the theory of abaci and a naturally-appearing positive definite binary quadratic form intimately related to the problem.

\section{Nuts \& Bolts}

\subsection{Cores and Quotients}

Fix an integer $t$. A partition which does not contain any $t$-hooks is called a \emph{$t$-core}. We may construct a $t$-core $\tilde{\lambda}$ from an arbitrary partition $\lambda$ by removing rim $t$-hooks from the Ferrers-Young diagram of $\lambda$. Moreover, this $t$-core is unique. 
To this end, \cite{GarvanKimStanton} describe an algorithm for finding both the core and quotient of a partition by systematically removing rim-hooks from $\lambda$ to build a $t$-quotient and arrive at the $t$-core of a partition.

The following is a well-known bijection which identifies a partition with its $t$-core and a $t$-tuple of partitions called its $t$-quotient.  In particular, let $P$ denote the space of all partitions, $c_t(P)$ all $t$-core partitions, and $q_t(P)$ the space of $t$-quotients, which is isomorphic to the direct product of $t$ copies of $P$. Then the bijection $\varphi\colon P \to c_t(P) \times q_t(P)$ is given by 
\[
\varphi(\lambda) = (\tilde{\lambda}, \lambda_0, ..., \lambda_{t-1}).
\]
In particular, we have
\begin{equation}
|\lambda|=|\tilde{\lambda}|+ t \cdot \sum_{i=0}^{t-1}|\lambda_i|.
\end{equation}

This decomposition is classical and gives rise to the fact that for a given partition $\lambda$, the size of its corresponding $t$-quotient given by $\sum_{i=0}^{t-1}|\lambda_i|$ is a count of the number of $t$-hooks contained in $\lambda$ (for example, see \cite{olsson1993combinatorics}, Theorem 3.3).  In order to make full use of the content of this bijection, we will revisit this result in the context of abaci theory. 
 
\subsection{Abaci}

We may encode the data of a partition $\lambda$ of $n$ in an abacus. Let $\lambda = \{\lambda_1 \geq \lambda_2 \geq \dots \lambda_s > 0\}$.  For each $1\leq i \leq s$ define the $i$th \emph{structure number} $B_i$ by  $B_i = \lambda_i - i + s$; i.e., the structure number $B_i$ is the hook number of the entry in the $i$th row and first column of the Young diagram.

Now, create an abacus with $t$ vertical runners labeled $0$ through $t-1$, each infinitely long. We will place beads on the runners in accordance with the structure numbers.  In particular, the $B_i$ are positive integers, thus there exist (by Euclidean division) unique integers $r_i$ and $c_i$ such that 
\[
B_i = t(r_i-1)+c_i, \quad 0 \leq c_i \leq t-1, \quad r_i \geq 1.
\]
We place a bead representing $B_i$ in the row and column position $(r_i,c_i)$ on the abacus.

For example, consider the partition $\lambda = (5, 3, 2, 1)$. We find $B_1=\lambda_1 -1 + 4 = 5-1+4 = 8$. Likewise, $B_2=5$, $B_3=3$, and $B_4=1$. The corresponding abacus with $t=3$ is 
$$\begin{tabular}{c|ccc}
1& $\cdot$ & $\circ$ & $\cdot$ \\
2& $\circ$ & $\cdot$ & $\circ$ \\
3& $\cdot$ & $\cdot$ & $\circ$ \\
\end{tabular}$$
Conversely, given an abacus with beads in positions $\{(r_i,c_i)\}$, we can construct a decreasing sequence $B_1 \geq ... \geq B_k$ by defining $B_i=t(r_i-1)+c_i$. Then we can find a corresponding partition by setting $\lambda_i=B_i+i-s$. 

\begin{lemma}
    Removing a $t$-hook from a partition $\lambda$ is equivalent to sliding a bead up one row on the abacus representing $\lambda$.
\end{lemma}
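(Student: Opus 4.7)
The plan is to translate to the equivalent language of the linear (one-runner) abacus, on which rim-hook removal is classically well understood, and then read the result back onto the $t$-runner abacus. The dictionary is immediate: a bead at row-column position $(r,c)$ on the $t$-runner abacus corresponds, via $B = t(r-1)+c$, to a bead at position $B$ on a single infinite runner, and sliding it up one row on its runner (from $(r,c)$ to $(r-1,c)$) realizes exactly the linear move $B \mapsto B-t$. This move is legal precisely when $B - t \ge 0$ and the target linear position is unoccupied, which is the same as saying the slot $(r-1,c)$ on the $t$-runner abacus is empty.

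The key input is the classical fact that the multiset of hook-lengths in row $i$ of $\lambda$ equals $\{B_i - k : 0 \le k < B_i,\ k \notin \{B_j : j \ne i\}\}$, and that removing the rim hook of length $h = B_i - k$ starting in row $i$ is effected on the beta set by replacing $B_i$ with $k$ while fixing the other $B_j$'s. The intuition is that the arm cells of this rim hook correspond to beads strictly between positions $k$ and $B_i$ on the linear abacus, while the leg cells correspond to gaps in the same range, so arm length plus leg length plus one equals $B_i - k = h$. I would then specialize to $h = t$: a rim $t$-hook of $\lambda$ is removed by some linear move $B_i \mapsto B_i - t$ to an empty position, which is precisely a one-row upward slide on the $t$-runner abacus; conversely, every such slide yields a valid beta set, hence a partition of $|\lambda| - t$ obtained from $\lambda$ by removing a rim $t$-hook.

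The main obstacle is establishing the classical dictionary between hook-lengths and beta-set gaps cited in the second paragraph; rather than reprove it, I would cite \cite{GarvanKimStanton} or a standard reference such as James--Kerber. If a self-contained proof were needed, I would induct on $|\lambda|$ using the base case that sliding the rightmost bead one position to the left on the linear abacus removes a single outer corner cell of $\lambda$, and then build up an arbitrary rim $t$-hook removal as a composition of such elementary single-position slides carried out along the boundary.
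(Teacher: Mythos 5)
Your proposal is correct and follows essentially the same route as the paper: both cite the classical beta-set fact (Lemma 2.7.13 of James--Kerber) that removing a rim $t$-hook replaces a structure number $B_i$ by $B_i - t$, and then observe that, under $B = t(r-1)+c$, this is exactly a one-row upward slide on the $t$-runner abacus. Your added remark that the target position must be unoccupied is a worthwhile precision the paper leaves implicit, but it does not change the argument.
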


\begin{proof}
    Let $\{B_1,B_2,\dots,B_k\}$ be a set of structure numbers for the partition $\lambda$.  Removing a rim $t$-hook $T$ from $\lambda$ is equivalent to subtracting $t$ from some structure number $B_\ell$, resulting in the set of structure numbers for $\lambda \setminus T$, given by $\{B_1,\dots,B_{\ell -1},B_{\ell}-t,B_{\ell +1},\dots,B_k\}$ (Lemma 2.7.13, \cite{symmetricgrouptext}).
    
    By construction, the bead position on the abacus for $B_{\ell}$ is given by $(r_{\ell},c_{\ell})$, where $B_{\ell}=t(r_{\ell}-1)+c_{\ell}$.  Then
    \begin{align*}
        B_{\ell}-t &= t(r_{\ell}-1)+c_{\ell}-t\\
        &= t(r_{\ell}-2)+c_{\ell}\\
        &= t((r_{\ell}-1)-1)+c_{\ell},
    \end{align*}
    and thus removing a $t$-hook has the effect of sliding a bead up one row on the abacus.
\end{proof}

Since rows in the abacus representing the partition $\lambda$ are labeled with non-negative integers, and sliding a bead upward on the abacus fixes the column while subtracting 1 from the row number, the process of removing $t$-hooks can only be done finitely many times before arriving at an abacus representing the $t$-core of $\lambda$.

Furthermore, since removing a $t$-hook corresponds to sliding a bead upward on the abacus, an abacus representing a $t$-core partition has no gaps between beads in any column, and any non-empty column has a bead in the first row.  (Theorem 4, \cite{Ono_4-cores}.)  
Following this observation, we may restate (1) in terms of the $t$-hooks contained in  $\lambda$.

\begin{corollary} \label{corollary}
Let $h_t(\lambda)$ denote the number of $t$-hooks contained in $\lambda$, and let $\tilde{\lambda}$ denote its $t$-core. Then
$|\lambda| = |\tilde{\lambda}| + t \cdot h_t(\lambda).$
\end{corollary}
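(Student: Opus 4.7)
My plan is to iterate the preceding lemma. Each rim $t$-hook removal corresponds to sliding a single bead up one row of the $t$-abacus and deletes exactly $t$ cells from the Ferrers-Young diagram. Writing $R(\lambda)$ for the total number of such slides required to bring the abacus of $\lambda$ into the $t$-core configuration (in which each column's beads occupy the topmost rows), I immediately get
\[
|\lambda|-|\tilde{\lambda}|=t\cdot R(\lambda),
\]
so the corollary reduces to the identity $R(\lambda)=h_t(\lambda)$.

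To prove this identity I would count both sides column-by-column on the abacus. Fix a column $c\in\{0,\dots,t-1\}$ and let the beads of $\lambda$ in column $c$ lie in rows $\rho_1<\rho_2<\cdots<\rho_m$. They must be repacked into rows $1,2,\dots,m$ in the core, so the column-$c$ slide count equals $\sum_{j=1}^m(\rho_j-j)$. On the other hand, the classical abacus description of hooks identifies hooks of $\lambda$ with pairs $(y,x)$ consisting of a bead at linear abacus position $y$ and an empty position $x<y$, with hook length $y-x$; since $y$ and $x$ lie in a common column of the $t$-abacus precisely when $y-x$ is divisible by $t$, the $t$-hooks are exactly those pairs for which bead and empty slot sit in the same column. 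For the $j$-th bead in column $c$ the number of empty positions strictly below it in that column is $(\rho_j-1)-(j-1)=\rho_j-j$, so the column-$c$ $t$-hook count is also $\sum_{j=1}^m(\rho_j-j)$. Summing over $c$ yields $R(\lambda)=h_t(\lambda)$, and combining with the previous display gives the corollary.

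The main obstacle is the hook/bead-pair correspondence invoked above: that hooks of $\lambda$ biject with (bead, lower empty position) pairs on the abacus, with hook length equal to the gap in linear positions. This is a standard piece of abacus theory, following from the role of the structure numbers $B_i$ as first-column hook lengths together with the observation that shortening the hook at $(i,j)$ replaces $B_i$ by $B_i-h(i,j)$, a move that is admissible on the abacus precisely when the target position is unoccupied. I would either cite this fact from \cite{symmetricgrouptext} or spell out the brief derivation inline before invoking it.
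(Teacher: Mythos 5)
Your proof is correct, and it is in fact more self-contained than the paper's own treatment. The paper obtains the corollary by combining the bead-slide lemma with the Garvan--Kim--Stanton identity (1), $|\lambda| = |\tilde{\lambda}| + t\sum_{i}|\lambda_i|$, together with the assertion (made in Section 2.1 but never proved there) that the size of the $t$-quotient counts the $t$-hooks of $\lambda$; the corollary is then presented simply as a restatement of (1). You bypass the quotient and identity (1) entirely: the relation $|\lambda| - |\tilde{\lambda}| = t\cdot R(\lambda)$ comes directly from the fact that each rim $t$-hook removal deletes $t$ cells and is a single bead slide, and you then prove the key identification $R(\lambda) = h_t(\lambda)$ by the column-by-column count of (bead, gap) pairs. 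That identification is precisely the step the paper leaves implicit, so your argument both recovers the needed consequence of (1) and supplies the missing combinatorial justification. The one ingredient you import --- that hooks of $\lambda$ biject with pairs consisting of a bead at position $y$ and an empty position $x < y$, with hook length $y - x$, so that $t$-hooks are exactly the same-runner pairs --- is standard abacus theory found in the reference \cite{symmetricgrouptext} near the cited Lemma 2.7.13, and your sketched derivation from the structure numbers being first-column hook lengths is the correct one; since the paper never states this fact explicitly, you should indeed spell it out or cite it precisely. One cosmetic point: with the paper's convention (row $1$ at the top, linear position $t(r-1)+c$), the empty positions you count for the $j$-th bead in a column are those in rows above it, i.e.\ at smaller linear position; your count $(\rho_j-1)-(j-1)=\rho_j-j$ is right, but ``strictly below'' should read ``at smaller position'' to match the stated orientation.
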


Additionally, we may thus denote the abaci of $t$-cores by $t$-tuples of non-negative integers, indicating the number of beads in each column.  However, there are multiple abaci which represent a single $t$-core partition:

\begin{lemma}[\cite{Ono_4-cores} Lemma 1]
    The two abaci $$\mathfrak{A}_1 = (a_0,a_1,\dots,a_{t-1})\ \ \ 
    {\text {\rm and}}\ \ \ \mathfrak{A}_2 = (a_{t-1}+1,a_0,a_1,\dots,a_{t-2})$$ represent the same $t$-core partition.
\end{lemma}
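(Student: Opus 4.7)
The approach is a direct verification using the structure-number formalism. Since both abaci represent $t$-cores, the $a_c$ beads in column $c$ must occupy rows $1, 2, \ldots, a_c$ with no gaps, so the complete set of structure numbers for $\mathfrak{A}_1$ is
\[
S_1 = \{\, t(r-1) + c : 0 \leq c \leq t-1,\ 1 \leq r \leq a_c \,\},
\]
and writing out $\mathfrak{A}_2 = (a_{t-1}+1,\, a_0,\, a_1,\, \ldots,\, a_{t-2})$ similarly gives
\[
S_2 = \{\, tk : 0 \leq k \leq a_{t-1} \,\} \cup \{\, t(r-1) + c : 1 \leq c \leq t-1,\ 1 \leq r \leq a_{c-1} \,\}.
\]
The bead counts are $s = a_0 + \cdots + a_{t-1}$ for $\mathfrak{A}_1$ and $s+1$ for $\mathfrak{A}_2$.

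The central step is to establish the identity
\[
S_2 = \{0\} \cup \{\, B + 1 : B \in S_1 \,\}.
\]
I would prove this by splitting $S_1$ according to the column index $c$. For a bead $B = t(r-1) + c \in S_1$ with $0 \leq c \leq t-2$, we have $B + 1 = t(r-1) + (c+1)$, which sits in column $c+1 \in \{1, \ldots, t-1\}$ at row $r$; reindexing by $c' = c+1$ recovers precisely the second piece of $S_2$. For $c = t-1$, we have $B + 1 = tr$, landing in column $0$ at row $r+1$, so as $r$ ranges over $1, \ldots, a_{t-1}$ these structure numbers give $\{t, 2t, \ldots, t a_{t-1}\}$; adjoining the new bead with structure number $0$ yields $\{0, t, \ldots, t a_{t-1}\}$, matching the first piece of $S_2$.

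With the identity in hand, I conclude via the reconstruction formula $\lambda_i = B_i + i - s$. Listing $S_1$ in decreasing order as $B_1 > B_2 > \cdots > B_s$, the corresponding decreasing list for $S_2$ is $B_1 + 1 > B_2 + 1 > \cdots > B_s + 1 > 0$, with $s' = s+1$. Hence the partition $\lambda'$ recovered from $\mathfrak{A}_2$ satisfies
\[
\lambda'_i = (B_i + 1) + i - (s+1) = B_i + i - s = \lambda_i \quad \text{for } 1 \leq i \leq s,
\]
together with $\lambda'_{s+1} = 0 + (s+1) - (s+1) = 0$, which contributes no new part. Therefore $\mathfrak{A}_1$ and $\mathfrak{A}_2$ encode the same partition. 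The main thing requiring care is the bookkeeping of the cyclic column shift and the fact that the new leftmost column receives an extra bead at row $1$; no deeper combinatorial input is needed.
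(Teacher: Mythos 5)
Your proposal is correct. Note that the paper itself offers no proof of this lemma --- it is simply cited from \cite{Ono_4-cores} --- so there is no in-paper argument to compare against; your verification via the identity $S_2 = \{0\} \cup \{B+1 : B \in S_1\}$ together with the reconstruction formula $\lambda_i = B_i + i - s$ is the standard $\beta$-set shift argument and supplies exactly the missing details. Two small points worth making explicit if you write this up: the tuple notation $(a_0,\dots,a_{t-1})$ presupposes the no-gap, top-justified form of a $t$-core abacus (which is the paper's stated convention, so your use of rows $1,\dots,a_c$ is justified), and the reason adjoining the extra bead is harmless is precisely your computation $\lambda'_{s+1}=0$, i.e.\ zero parts are discarded when passing from structure numbers back to the partition.
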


By repeatedly applying the above lemma, we may find a unique abacus representation for a given $t$-core containing zero beads in the first column. Thus, a tuple $(0, a_1, ..., a_{t-1})$ uniquely represents a $t$-core. 

\subsection{Structure Theorem for 2- and 3-Cores}

We now specialize to the cases where $t$ is 2 or 3. Using the theory of abaci, we are able to completely classify $2-$ and $3-$core partitions by looking at the divisors of $8n+1$ and $3n+1$ respectively.

\begin{theorem}
 Let $c_t(n)$ denote the number of $t$-core partitions of $n$.
 \begin{enumerate}
\item We have $c_2(n) = 1$ if $8n+1$ is an odd square, and 0 otherwise.
\item We have $c_3(n) = \sum_{d \mid 3n+1} \left(\frac{d}{3}\right).$ In particular, $c_3(n)\neq 0$ if and only if for all primes $p \equiv 2 \bmod{3}$,  $\text{ord}_p(3n+1)$ is even.
    \end{enumerate}
\end{theorem}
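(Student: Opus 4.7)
The plan is to directly read off the size of each $t$-core from its unique normalized abacus representation, reducing each assertion to counting lattice points on a positive definite binary quadratic form. For (1), a 2-core is uniquely encoded by an abacus $(0, a)$ with $a \geq 0$, whose structure numbers $\{1, 3, \ldots, 2a-1\}$ reconstitute the staircase partition $\lambda = (a, a-1, \ldots, 1)$ of size $n = a(a+1)/2$. The identity $8n + 1 = (2a+1)^2$ is immediate, giving a bijection between 2-cores of $n$ and non-negative integers $a$ satisfying $(2a+1)^2 = 8n+1$. Since $8n+1$ is odd, such an $a$ exists (uniquely) precisely when $8n+1$ is a perfect square.

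For (2), the analogous computation on the abacus $(0, a, b)$ with $a, b \geq 0$ gives structure numbers $\{3i-2 : 1 \leq i \leq a\} \cup \{3j-1 : 1 \leq j \leq b\}$, and after summing and applying $|\lambda| = \sum_i B_i - s(s-1)/2$ (with $s = a+b$) one obtains the size formula $|\lambda| = a^2 - ab + b^2 + b$. Multiplying by $3$ and completing the square produces the key identity
\[
3n + 1 = (a+b+1)^2 - (a+b+1)(2b-a+1) + (2b-a+1)^2,
\]
whose right-hand side is precisely the norm form $N(u + v\omega) = u^2 - uv + v^2$ on $\ZZ[\omega]$ with $\omega = e^{2\pi i/3}$. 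Setting $u = a+b+1$ and $v = 2b-a+1$, and using $2u - v = 3a+1$ and $u + v = 3b+2$, one sees that $c_3(n)$ is the number of pairs $(u, v) \in \ZZ^2$ with $u^2 - uv + v^2 = 3n+1$ subject to the wedge conditions $u+v \geq 2$ and $2u-v \geq 1$.

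To conclude, I would invoke the classical formula $\#\{(u,v)\in\ZZ^2 : u^2 - uv + v^2 = N\} = 6\sum_{d \mid N}\left(\tfrac{d}{3}\right)$ for $\gcd(N,3)=1$, which counts ideals of norm $N$ in $\ZZ[\omega]$. The unit group $\{\pm\omega^k\}$ acts freely on the set of nonzero representations with orbits of size $6$, so it suffices to show that the admissible wedge is a fundamental domain for this action; this is the main obstacle. The argument is geometric: the wedge $S$ has opening angle $120^\circ$ with apex $z_0 = 1 + \omega$ lying on the unit circle, and its three rotates $\omega^k S$ ($k=0,1,2$) together cover $\CC$ outside the open unit disc, overlapping only along the boundary rays through $\omega^k z_0$. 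Since every representation satisfies $|z|^2 = 3n+1 \geq 1$, with equality only at $z = z_0$ (the empty partition when $n = 0$), exactly one orbit representative lies in $S$, giving $c_3(n) = \sum_{d \mid 3n+1}\left(\tfrac{d}{3}\right)$. The non-vanishing criterion then follows from the multiplicativity of this divisor sum and the prime-power evaluation $\sum_{j=0}^{k}\left(\tfrac{p^j}{3}\right)$, which equals $1$ when $k$ is even and $0$ when $k$ is odd for primes $p \equiv 2 \pmod{3}$.
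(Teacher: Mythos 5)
Your part (1) and your abacus computation leading to $3n+1=u^2-uv+v^2$ coincide with the paper's argument; the genuine problems are in the counting step of part (2). First, your reduction of $c_3(n)$ to wedge points drops an integrality constraint. Inverting $u=a+b+1$, $v=2b-a+1$ gives $a=(2u-v-1)/3$ and $b=(u+v-2)/3$, so besides $u+v\geq 2$ and $2u-v\geq 1$ you must require $u+v\equiv 2\pmod 3$; the norm condition only forces $u+v\not\equiv 0\pmod 3$, so this is not automatic. Concretely, for $n=1$ (norm $4$) the closed wedge contains both $(2,0)$ and $(2,2)$, but $(2,2)$ gives $a=1/3$; your count would be $2$ while $c_3(1)=1=\sum_{d\mid 4}\left(\frac d3\right)$.

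Second, the fundamental-domain geometry is not correct as stated. A $120^\circ$ wedge cannot be a fundamental domain for the order-$6$ unit group $\{\pm\omega^k\}$ (orbits of size $6$ would require a $60^\circ$ sector), and because the apex $1+\omega$ is not the origin, the three rotates $\omega^kS$ do not cover all lattice points of norm at least $1$: for norm $7$ the $\langle\omega\rangle$-orbit $\{2-\omega,\;1+3\omega,\;-3-2\omega\}$ has no element in the closed wedge (check: $(2,-1)$ fails $u+v\geq2$, $(1,3)$ fails $2u-v\geq1$, $(-3,-2)$ fails both), while the orbit of $3+\omega$ has exactly one; the raw wedge count for norm $7$ is $3$, which is neither $12/6=2$ nor $12/3=4$, whereas $c_3(2)=2=\sum_{d\mid 7}\left(\frac d3\right)$. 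The viable repair is to keep the congruence: $u+v\equiv2\pmod 3$ means $z\equiv-1\pmod{1-\omega}$, multiplication by $\omega$ preserves this class while $-1$ swaps the classes $\pm1$, so the admissible representations are exactly half of all $6\sum_{d\mid 3n+1}\left(\frac d3\right)$ of them and they form full $\langle\omega\rangle$-orbits; what remains — and this is the substantive step, which the paper itself delegates to the ideal correspondence of Theorem 4.1 of the cited crank paper rather than proving — is that each such orbit meets the translated cone $\{(1+\omega)+a(1-\omega)+b(1+2\omega):a,b\in\mathbb{Z}_{\geq0}\}$ exactly once. Your proposal as written does not establish this. Your closing multiplicativity argument for the non-vanishing criterion is fine (and slightly more explicit than the paper's inert-prime remark), but it rests on the unproven identity.
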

\begin{remark}
The equality in (1) is classical by considering the self-conjugate partitions arising from triangular numbers. The equality in (2) was first proven in \cite{Granville-Ono} by comparing coefficients of closely related modular forms. 
\end{remark}
\begin{proof}[Sketch of Proof]
$t=2$:
First, we show that $n$ is a triangular number if and only if $8n+1$ is an odd square. A triangular number has the form $n=\frac{k(k+1)}{2}$ for some integer $k$. Then 
\begin{align*}
    8n+1 &= 8\cdot \frac{k(k+1)}{2} + 1 = 4k(k+1)+1 = 4k^2+4k+1 = (2k+1)^2.
\end{align*}
Certainly, if $n$ is a triangular number, we have a partition $\lambda$ where the the parts are given by $\lambda_i = k-i+1$ for $1 \leq i \leq k$, where $n=k(k+1)/2$.  We can check that this partition is indeed a 2-core by simply noting every hook is symmetric, so the hook-length is of the form $2k+1$ for suitable $k$. 

Now suppose $\lambda$ is a 2-core partition of $n$. Then some abacus $\mathfrak{A}=(0,a)$ uniquely represents $\lambda$, and has the shape
$$\begin{tabular}{c|cc}
1& $\cdot$ & $\circ$ \\
%2& $\cdot$ & $\circ$ \\
$\vdots$ & $\vdots$ &$\vdots$ \\
$a$& $\cdot$ & $\circ$ \\
\end{tabular}$$
Note that $B_i = 2(a-i)+1, 1\leq i \leq a$, and that $a$ is the number of parts of the partition. Then recalling $\lambda_i = B_i+i-a$, we have $\lambda_i = 2(a-i)+1+i-a = (a-i)+1$, thus 
\begin{align*}
    n = \sum_{i=1}^a \lambda_i
%    &= \sum_{i=1}^a B_i+i-a \\
%    &= \sum_{i=1}^a 2(a-i)+1+i-a \\
%    &= \sum_{i=1}^a (a-i)+1 \\
    = \sum_{k=1}^a k = \frac{k(k+1)}{2}. 
\end{align*}

$t=3$:
Every 3-core partition can be uniquely represented by an abacus of the form $\mathfrak{A}=(0,a,b)$ for some non-negative integers $a$ and $b$.  Working backwards, we obtain an expression for $n$ in terms of the structure numbers determined by this abacus: 
	\begin{align*}
	n &= \sum_{i = 1}^{a+b} \lambda_i \\	
	&= \sum_{i = 1}^{a+b} B_i + \sum_{i = 1}^{a+b} i - \sum_{i = 1}^{a+b} (a+b) \\
	&=  \sum_{i = 1}^{a+b} B_i + \frac{(a+b)(a+b+1)}{2} - (a+b)^2.
	\end{align*}
Now, we need only compute the structure numbers. We may do this by considering the beads in column 1 and column 2 separately. We have 
	\begin{align*}
	\sum_{i = 1}^{a+b} B_i &= \sum_{i = 1}^{a} (3(i-1)+1) + \sum_{j = 1}^{b} (3(j-1)+2) \\
	%&= 3\sum_{i = 1}^{n} i - \sum_{i = 1}^{n}2 + 3\sum_{j = 1}^{m} j - \sum_{j=1}^m 1 \\
	&=3\cdot\frac{a(a+1)}{2} -2a + 3\cdot\frac{b(b+1)}{2} - b.
	\end{align*}
Combining with the above and simplifying, we ultimately arrive at 
\[n = a^2 - ab + b^2 + b. \]
Define $x := -a+2b+1$, $y := a+b+1$. Then 
\[3n +1= x^2-xy+y^2.\]

We now have an expression for $3n+1$ in terms of a positive definite binary quadratic form with discriminant $D=-3$. The ring of integers of the imaginary quadratic number field $K=\QQ(\sqrt{-3})$ is given by $\ZZ[\omega]$, where $\omega = \frac{1+\sqrt{-3}}{2}$. The norm on $\ZZ[\omega]$ simplifies to $N(\alpha + \omega \beta) = \alpha^2-\alpha \beta+\beta^2$. The desired equality follows from constructing a correspondence between ideals in $\mathcal{O}_K$ with norm $3n+1$ as in the proof of Theorem 4.1 in \cite{brunat2021crank}. 

In particular, 
every ideal of $\mathcal{O}_K$ is principal and factors uniquely into a product of finitely many (not necessarily distinct) prime ideals. Then
given an integer prime $p$ that divides $3n+1$, we have that $p$ is either congruent to 1 or 2 mod 3. 
%If $p \equiv 1 \bmod{3}$, then $p$ splits in $\mathcal{O}_K$ so we may write $(p)=P_1P_2$, where $N(P_1)=N(P_2)=p$. 
In the latter case, $p$ is inert and the principal ideal $(p)$ in $\mathcal{O}_K$ is prime with norm $p^2$. Thus, such $p$ we must have an even exponent in the prime factorization of $3n+1$ in $\ZZ$. 
Then to determine whether $n$ admits a 3-core partition, we may compute the prime factorization of $3n+1$ in $\ZZ$ and check for even exponents on all primes $p$ where $p \equiv 2 \bmod 3$.
\end{proof}

\section{Proof of Theorem \ref{thm:main}}

Suppose $\ell$ is an odd prime. Write $n = \ell m +a_2$ and suppose $\lambda \vdash n$ such that $h_2(\lambda) = \ell k +a_1$, or $h_2(\lambda) \equiv a_1 \bmod{\ell}$. Denote $|\tilde{\lambda}|$ by $\tilde{n}$. Using Corollary \ref{corollary}, we may write 
$$n = \tilde{n} + 2(\ell k + a_1) = \ell m + a_2$$ so 
$\tilde{n} = -2a_1 + a_2  + \ell(m-2k)$.
Then
$$8\tilde{n} + 1 = -16a_1 + 8a_2 + 1 + \ell(8m-16k).$$

Now, $8\tilde{n} + 1 \equiv -16a_1 + 8a_2 + 1 \bmod{\ell}$. If $\left(\frac{-16a_1+8a_2+1}{\ell}\right)=-1$, then $8 \tilde{n} + 1$ cannot be an odd square. Since $\tilde{\lambda}$ was assumed to be a 2-core, we have reached a contradiction. Thus, no such $\lambda$ can exist. 
    
Next, suppose $\ell$ is a prime which is $2 \bmod{3}$. Write $n=\ell^2m+a_2$ and suppose $\lambda \vdash n $ such that $h_3(\lambda)= \ell^2 k +a_1$, or $h_2(\lambda) \equiv a_1 \bmod{\ell^2}.$
    We may write 
$$n = \tilde{n} + 3(\ell^2k+ a_1) = \ell^2m + a_2$$
 
so $\tilde{n} = - 3a_1 +a_2 + \ell^2(m-3k)$. Then
$$3\tilde{n}+1 = -9a_1 + 3a_2 + 1 + \ell^2(3m-9k).$$
Now if $\text{ord}_\ell(-9a_1 + 3a_2 + 1)=1$, then $\ell \mid -9a_1 + 3a_2 + 1$ but $\ell^2  \nmid -9a_1 + 3a_2 + 1$. Since $\ell^2 \mid \ell^2(3n-9k)$, we conclude that $\ell$ divides $3\tilde{N}+1$ but $\ell^2$ does not. Since $\tilde{\lambda}$ was assumed to be a 3-core, we have reached a contradiction; therefore no such $\lambda$ can exist. 
\hfill $\square$

%-------------------------------------------------

%-------------------------------------------------

%-------------------------------------------------

\clearpage
\bibliographystyle{amsalpha}
\bibliography{3hooks}

\providecommand{\bysame}{\leavevmode\hbox to3em{\hrulefill}\thinspace}
\providecommand{\MR}{\relax\ifhmode\unskip\space\fi MR }
% \MRhref is called by the amsart/book/proc definition of \MR.
\providecommand{\MRhref}[2]{%
  \href{http://www.ams.org/mathscinet-getitem?mr=#1}{#2}
}
\providecommand{\href}[2]{#2}
\begin{thebibliography}{BCMO21}

\bibitem[BCMO21]{BCMO}
K.~Bringmann, W.~Craig, J.~Males, and K.~Ono, \emph{Distributions on partitions
  arising from hilbert schemes and hook lengths}, Forum of Mathematics, Sigma
  (arXiv:2109.10394), recommended for publication (2021).

\bibitem[BN22]{brunat2021crank}
O.~Brunat and R.~Nath, \emph{A crank-based approach to the theory of 3-core
  partitions}, Proceedings of the American Mathematical Society \textbf{150}
  (2022), no.~01, 15--29.

\bibitem[CP21]{CraigPun}
W.~Craig and A.~Pun, \emph{Distribution properties for t-hooks in partitions},
  Annals of Combinatorics \textbf{25} (2021), no.~3, 677--695.

\bibitem[FRT54]{FrameRobinsonThrall}
J.~S. Frame, G.~de~B. Robinson, and R.~M. Thrall, \emph{The hook graphs of the
  symmetric groups}, Canad. J. Math. \textbf{6} (1954), 316--324. \MR{62127}

\bibitem[GKS90]{GarvanKimStanton}
F.~Garvan, D.~Kim, and D.~Stanton, \emph{Cranks and {$t$}-cores}, Invent. Math.
  \textbf{101} (1990), no.~1, 1--17. \MR{1055707}

\bibitem[GO96]{Granville-Ono}
A.~Granville and K.~Ono, \emph{Defect zero {$p$}-blocks for finite simple
  groups}, Trans. Amer. Math. Soc. \textbf{348} (1996), no.~1, 331--347.
  \MR{1321575}

\bibitem[JK81]{symmetricgrouptext}
G.~James and A.~Kerber, \emph{The representation theory of the symmetric
  group}, Encyclopedia of Mathematics and its Applications, vol.~16,
  Addison-Wesley Publishing Co., Reading, Mass., 1981, With a foreword by P. M.
  Cohn, With an introduction by Gilbert de B. Robinson. \MR{644144}

\bibitem[Ols93]{olsson1993combinatorics}
J{\o}rn Olsson, \emph{Combinatorics and representations of finite groups},
  no.~20, Fachbereich Mathematik, Universit{\"a}t Essen, 1993.

\bibitem[OS97]{Ono_4-cores}
K.~Ono and L.~Sze, \emph{{$4$}-core partitions and class numbers}, Acta Arith.
  \textbf{80} (1997), no.~3, 249--272. \MR{1451412}

\end{thebibliography}

\end{document}